\theoremstyle{definition}
\newtheorem{thm}{Theorem}
\newtheorem{definition}{Definition}
\newtheorem{lem}{Lemma}
\newtheorem{rem}{Remark}
\title{On total flexibility of local structures of Finsler tori without conjugate points}
\author{Dong Chen}
\address{Dong Chen: Department of Mathematics, Pennsylvania State University, University Park, PA 16802, USA}
\email{chen\_d@math.psu.edu}
\date{}                                         
\begin{document}
\maketitle
\begin{abstract}
 We show that given a point on a Finsler surface, one can always find a
neighborhood of the point and isometrically embed this neighborhood
into a Finsler torus without conjugate points.
\end{abstract}

\section{Introduction}
 
 In this paper we study the universality of local structures of 2-dimensional Finsler tori without conjugate points. It is known that 2-dimensional Riemannian tori without conjugate points are flat, which was proved by E.Hopf ([8]) in 1940s. Hopf's paper is a partial answer to a question asked by Hedlund and Morse([7]), that is, whether the same result still holds in all dimensions. The positive answer to this question is now known as Hopf's conjecture. After that many other people studied this problem with various assumptions. In 1994, D.Burago and S.Ivanov([1]) proved the Hopf's conjecture. This breakthrough shows the rigidity of Riemannian tori without conjugate points. 

However, there are examples of non-flat Finsler tori without conjugate points, thus the original Hopf's conjecture does not hold in Finsler case. For example, one can construct such a non-flat Finsler 2-torus by making symplectic (contact) perturbations on the Euclidean torus or as surfaces of revolution.

Before Burago and Ivanov, C. Croke and B. Kleiner([4]) have shown that  in Riemannian case, smoothness (or bi-Lipchitz) of the Heber foliation([6]) would imply that a torus without conjugate points is flat. Smoothness of the Heber foliation is (more or less) equivalent to the assertion that the geodesic flow is smoothly conjugate to that of some flat Finsler torus. It is still an open question if the Heber foliation of a Finsler manifold without conjugate points  is smooth, and whether the geodesic flow of such manifold is smoothly conjugate to that of some flat Finsler torus. We even do not know whether every geodesic with an irrational rotation number is dense in a 2-dimensional Finsler torus without conjugate points.
\footnote{2010 \emph{Mathematics Subject Classification.} 53C23, 53C60.

\emph{Key words and phrases.} Finsler geometry, conjugate points, enveloping function.

The author was partially supported by Dmitri Burago's NSF grant DMS-0905838.}

In this note I extend an approach suggested by Burago-Ivanov to show that there are no local restrictions for  a metric to be the metric of a Finsler torus without conjugate points. Namely, I prove the following theorem:
\begin{thm}
Suppose $(M, \varphi)$ is a  $C^k(k\geq 3)$ Finsler surface. Then for any $ p_0\in M$, we can find a neighborhood $U$ of $p_0$, and an isometric embedding $\Psi: (U, \varphi|_{TU}) \rightarrow(\mathbb{T}^2, \tilde{\varphi})$, where $(\mathbb{T}^2,\tilde{\varphi})$ is a  $C^k$ Finsler torus without conjugate points. If in addition, $\varphi$ is symmetric, then $\tilde{\varphi}$ can be chosen to be symmetric too.
\end{thm}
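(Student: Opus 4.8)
The plan is to reduce, by a rescaling argument, to the case where $\varphi$ is $C^k$-close to a flat (Minkowski) metric near $p_0$, and then to glue this germ to a flat Minkowski torus through a carefully designed collar. The collar must be chosen so that, in the universal cover --- which is flat outside a $(R\mathbb{Z})^2$-periodic family of ``bumps'' --- every geodesic is still globally minimizing; since a globally minimizing geodesic has no conjugate points, this produces a Finsler torus without conjugate points.

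For the rescaling reduction, fix coordinates centered at $p_0$, so that $B_\epsilon$ carries $\varphi$ for small $\epsilon$, and set $\varphi_\epsilon(x,v):=\varphi(\epsilon x,v)$ on $B_1$. The dilation $x\mapsto\epsilon x$ is, after multiplying the metric by a constant, an isometry between $(B_\epsilon,\varphi)$ and $(B_1,\varphi_\epsilon)$, and constant rescaling affects neither conjugate points nor isometric embeddability. Since every $x$-derivative of $\varphi_\epsilon$ carries a positive power of $\epsilon$, we get $\varphi_\epsilon\to\varphi_0$ in $C^k(B_1\times S^1)$ as $\epsilon\to 0$, where $\varphi_0(v):=\varphi(p_0,v)$. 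Hence it suffices to prove: if $\varphi$ is $C^k$-sufficiently close to a Minkowski norm $\varphi_0$ on $B_1$, then the germ of $\varphi$ at $0$ embeds isometrically into a $C^k$ Finsler torus without conjugate points.

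Now I would fix the target $\mathbb{T}^2=\mathbb{R}^2/(R\mathbb{Z})^2$ with $R$ large, decide that $\Psi$ maps a small sub-ball of $B_1$ near the center of $\mathbb{T}^2$, and seek an extension $\tilde\varphi$ on $\mathbb{T}^2$ with: (i) $\tilde\varphi=\varphi$ near $p_0$ (which defines $\Psi$); (ii) $\tilde\varphi=\varphi_0$ outside a fixed-size disk; (iii) every lifted geodesic on $\mathbb{R}^2$ globally minimizing. The classical tool for (iii) is calibration: a closed $1$-form $\omega$ with $\tilde\varphi^*(\omega)\equiv 1$ satisfies $L(\gamma)\ge\int_\gamma\omega$, with equality precisely along its dual geodesic foliation, which therefore consists of globally minimizing geodesics. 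If one can produce a sufficiently rich family of such calibrations --- ideally, through each point, calibrating forms whose dual directions sweep out the unit sphere --- then every geodesic is globally minimizing and (iii) holds. The essential difficulty is that the naive interpolation $(1-\chi)\varphi_0+\chi\varphi$ does \emph{not} admit such a family: a generic $C^k$-small perturbation of $\varphi_0$ has conjugate points, so the collar --- and even the representative of $\varphi$ inside its isometry class --- must be chosen with care.

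This is where the enveloping function enters, and it is the heart of the matter. I would prescribe the extension by its geometric data: a foliation of $\mathbb{T}^2$ by would-be minimizing geodesics, one for each direction in a suitable open set, together with a transverse potential --- the enveloping function --- and then \emph{recover} $\tilde\varphi$ as the envelope of the associated family of supporting linear forms. In an envelope representation, convexity of the unit ball is essentially automatic (one mollifies to get strong convexity, which keeps everything $C^k$ since $k\ge 3$), the calibrating forms survive by construction, and inserting the germ amounts to bending the leaves in a small ball around $p_0$ so that they become the $\varphi$-geodesics while remaining calibrated. The main obstacle --- and the reason the rescaling step is needed --- is to perform this insertion while simultaneously keeping the envelope strongly convex and the calibrating family rich enough, \emph{globally} on the compact torus; both are open conditions that persist once $\|\varphi-\varphi_0\|_{C^k}$ is small, and the quantitative control of the envelope and the foliation under perturbation is the technical core of the proof. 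Finally, if $\varphi$ is reversible then so is $\varphi_0$, and the whole construction runs equivariantly under $v\mapsto -v$ (take the direction data invariant under $\theta\mapsto\theta+\pi$ and no drift term), yielding a reversible $\tilde\varphi$ with exactly the same estimates.
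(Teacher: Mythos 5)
Your plan follows the paper's route: reduce by rescaling to a germ $C^k$-close to a constant Minkowski norm $\varphi_0$, encode the metric via an enveloping function (the family of calibrating, distance-like potentials $F_p$), interpolate the enveloping function --- rather than the metric itself --- to glue the germ into the flat torus, and read off the absence of conjugate points from the resulting calibrations; this is exactly the paper's construction $\tilde F = Fg + F^0(1-g)$ together with its Lemmas 1 and 2 and the periodization argument. One small remark: the mollification step you mention for recovering strong convexity of the unit cosphere is unnecessary, since the paper's Lemma 1 already shows that any $C^3$-small perturbation of an enveloping function has quadratically convex cospheres and hence is itself an enveloping function.
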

 Let $\gamma$ be a ray with unit speed in a Finsler manifold. Define the Busemann function $b_{\gamma}: M\rightarrow \mathbb{R}$ with respect to $\gamma$ by
$$b_{\gamma}(x):=\lim_{t\rightarrow \infty}(t-d(x,\gamma(t))).$$
The main idea in the proof of the theorem is to generalize the concept of Busemann functions on Finsler manifold to an enveloping function. Such extension does not depend on the ray.  And we can get back the Finsler metric from the enveloping function. By perturbing the enveloping function we can get a perturbation of the Finsler metric.
\section {Preliminaries}
\begin{definition}
([5])\textbf{A \boldmath{$C^k(k\geq 2)$} Minkowski norm on  \boldmath{$\mathbb{R}^n$}} is a nonnegative real-valued function $\varphi$ on $\mathbb{R}^n$ with the following properties: 

(i) $\varphi$ is $C^k$ on $\mathbb{R}^n\backslash\{0\}$;

(ii) $\varphi(\lambda v)= \lambda \varphi(v), $ for $ \lambda>0$;

(iii) The $n\times n$ matrix
\begin{displaymath}
(g_{ij}):=(\frac{1}{2}\frac{\partial^2}{\partial y_i \partial y_j} \varphi^2)
\end{displaymath}
is positive definite at every point except the origin.

If moreover we have:

(iv) $\varphi(-v)=\varphi(v),$

then $\varphi$ is called a \textbf{symmetric  \boldmath{$C^k$} Minkowski norm}.

\end{definition}
\begin{rem}
By (i) and (ii), we have $\varphi(0)=0$. By (iii), we know $\varphi(y)=0$ implies $y=0$. The condition (iii) is equivalent to the fact that the unit sphere $S \subseteq\mathbb{R}^n$ is quadratically convex, i.e., every point on $S$ has positive principal curvatures. 

\end{rem}

\begin{definition}
Let $M$ be an $n$-dimensional $C^\infty$ manifold. For $k\geq 2$,  \textbf{A  \boldmath{$C^k$} Finsler structure on  \boldmath{$M$}} is a function $\varphi:TM\rightarrow [0,\infty)$ with the following properties:

(i) $\varphi$ is $C^k$ on the complement of the zero section;

(ii) $\varphi|_{T_xM}$ is a Minkowski norm for all $x\in M$.

A manifold with a $C^k$ Finsler structure is called \textbf{a  \boldmath{$C^k$} Finsler manifold} (or just a Finsler manifold). 

If moreover, for all $x\in M$, $\varphi|_{T_xM}$ is a symmetric Minkowski norm, then the Finsler structure is called \textbf{symmetric}.

\end{definition}

\begin{rem}
If $\varphi$ is a Minkowski norm on $\mathbb{R}^n$, then we have the triangle inequality: For all $v_1,v_2\in\mathbb{R}^n$, $\varphi(v_1+v_2)\leq \varphi(v_1)+\varphi(v_2)$. The equality holds iff $y_2=\alpha y_1$ or $y_1=\alpha y_2$ for some $\alpha\geq0$. Hence symmetric Minkowski norms are norms in the sense of functional analysis.
\end{rem}

If $\gamma:[a,b]\rightarrow M$ is a smooth curve on a Finsler manifold $(M, \varphi)$, then one defines the length of $\gamma$ by
\begin{displaymath}
L(\gamma):= \int^b_a \varphi(\gamma(t), \gamma'(t))dt.
\end{displaymath}
Using this definition of length we define a non-symmetric metric(i.e. a positive definite function on $M\times M$ satisfying the triangle inequality) on $M$ by letting the distance $d(x,y)$ from $x$ to $y$ be the infimum of the lengths of all piecewise smooth curves starting from $x$ and ending at $y$. It is non-symmetric  since $d(x, y)$ may not be equal to $d(y, x)$. Under this non-symmetric metric we can define geodesics in the following way: a curve $\gamma: [a, b]\rightarrow M$ is said to be a geodesic of $(M, \varphi)$ if for every sufficiently small interval $[c, d]\subseteq [a, b]$,  $\gamma|_{[c,d]}$ realizes the distance from $\gamma(c)$ to $\gamma(d)$. In this paper we will always assume that a geodesic is unit-speed, i.e. if $\gamma$ is a geodesic, then $\varphi(\gamma(s), \gamma'(s))=1$, for $s\in[a,b]$. A geodesic $\gamma: [a, b]\rightarrow M$ is called \textbf{minimal} if for $a\leq t_1<t_2\leq b, d(\gamma(t_1), \gamma(t_2))=t_2-t_1$. And a $C^k$ Finsler metric $\varphi$ on $M$ is called $\textbf{simple}$ if every pair of points on $M$ is connected by a unique geodesic depending $C^k$ smoothly on the endpoints.

\begin{definition}
A function $f$ on a Finsler manifold $(M, \varphi)$ is called \textbf{forward 1-Lip-chitz} if for $p, q\in M, f(p)-f(q)\leq d(q, p)$.
\end{definition}

\section{Enveloping function}

We use some notation and techniques from [3]. To make this note more reader-friendly, we copy them here.

Let $(M, \varphi)$ be a Finsler manifold. We have a norm $\varphi^*$ on the cotangent bundle $T^*M$ given by:
\begin{displaymath}
\varphi^*(\alpha):=\sup\{\alpha(v)|v\in T_xM, \varphi(v)=1\},
\end{displaymath}
for $x\in M, \alpha\in T^*_x M$. And we denote by $UM$ and $U^*M$ the bundles of unit spheres of $\varphi$ and $\varphi^*$. Since $\varphi$ is Minkowski on each tangent space, $\varphi^*$ is also Minkowski on each cotangent space, hence $U^*_xM$ is quadratically convex for all $x\in M$. A $C^1$ function on $M$ is called \textbf{distance-like} if $\varphi^*(d_x f)=1$ for all $x\in M$. 

Notice that a distance-like function is always forward 1-Lipchitz. In fact, for any $x,y\in M$ and any unit-speed curve $c:[a,b]\rightarrow M$ starting at $x$ and ending at $y$, if $f$ is distance-like, then 
$$f(y)-f(x)=\int_a^b df_{c(s)}(c'(s))ds\leq b-a=L(c)$$
By taking the infimum for all $c$, $f$ is  forward 1-Lipchitz.\\

Let $D$ denote the n-dimensional disc with a Finsler metric $\varphi$. If $\varphi$ is simple, then the distance function $d(p,\cdot)$ of a point $p\in\partial D$ is distance-like. 

\begin{definition}
A continuous function $F:\partial D\times D\rightarrow\mathbb{R}$ is called a  \textbf{ \boldmath{$C^k$} enveloping function} for $\varphi$ if $F$ is $C^k$ smooth outside the diagonal of $\partial D\times \partial D$ and the following conditions are satisfied:

(a) For every $p\in\partial D$, the function $F_p:=F(p,\cdot)$ is distance-like.

(b) For every $x\in D$, the map $p\rightarrow d_x F_p$ is a diffeomorphism from $\partial D$ to $U_x^*D$.
\end{definition}

If $\varphi$ is a $C^k$ simple Finsler metric and $F$ is given by $F(p,x):=d(p, x)$, then $F$ is a $C^k$ enveloping function. On the other hand, given an enveloping function $F$ we can define a distance function on $D\times D$ by
$$d_F(x,y):=\sup_{p\in\partial D}F(p,y)-F(p,x).$$
By $d_F$ we can define a metric $\varphi_F$ on $TD$, and the unit sphere of $\varphi_F^*$ in $T_x^*D$ is the image of the map $\partial D\rightarrow T_x^*D, p\mapsto d_xF_p$.

\begin{lem}
Let $F$ be an enveloping function for $\varphi$ given by $F(p,x)=d(p, x)$. Then every sufficiently small $C^3$ perturbation $\tilde{F}$ of $F$ is an enveloping function of some Finsler metric $\tilde{\varphi}$.
\end{lem}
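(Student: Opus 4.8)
The plan is to verify that a small $C^3$ perturbation $\tilde F$ of $F(p,x)=d(p,x)$ still satisfies conditions (a) and (b) of the definition of enveloping function, and then to exhibit the Finsler metric $\tilde\varphi$ it induces. The starting point is the observation that both conditions are \emph{open} in an appropriate $C^1$-sense once we know they hold for $F$. Concretely, condition (a) requires $F_p$ to be distance-like, i.e. $\varphi^*(d_xF_p)=1$ for every $x$; but this exact normalization will generally fail for an arbitrary perturbation, so the first real step is to \emph{renormalize}: given $\tilde F$ close to $F$, the differential $d_x\tilde F_p$ is close to the unit covector $d_xF_p$, hence stays in the region where $\varphi^*$ is smooth and positive, so we may replace $\tilde F_p$ by a rescaled version (or, more precisely, observe that what we need is not that $\tilde F_p$ is distance-like for the \emph{original} $\varphi$, but that the family $\{d_x\tilde F_p\}_{p\in\partial D}$ sweeps out a quadratically convex hypersurface in each $T_x^*D$, which then \emph{defines} the new norm $\tilde\varphi^*_x$). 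So the logical order is: first produce the candidate dual norm, then check it is a genuine Minkowski norm, then check the enveloping axioms relative to it.

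First I would fix $x\in D$ and study the map $\Phi_x:\partial D\to T_x^*D$, $p\mapsto d_x\tilde F_p$. For $\tilde F=F$ this is a $C^{k-1}$ embedding onto the quadratically convex hypersurface $U_x^*D$ by hypothesis (condition (b) for $F$). Since $\tilde F$ is $C^3$-close to $F$, the map $\Phi_x$ is $C^2$-close to the unperturbed one; I would invoke the standard stability of embeddings and of quadratic (i.e. strictly positive) convexity under $C^2$-small perturbations to conclude that $\Phi_x(\partial D)$ is again a closed hypersurface in $T_x^*D$, star-shaped about the origin, with everywhere positive principal curvatures. Taking this hypersurface as the unit sphere of a norm $\tilde\varphi_x^*$ on $T_x^*D$ (extended by homogeneity), quadratic convexity of the sphere is exactly the statement that $\tilde\varphi_x^*$ is a Minkowski norm in the sense of Definition~1; its Legendre dual $\tilde\varphi_x$ on $T_xD$ is then also Minkowski, and smoothness in $(x,p)$ — hence in the base point $x$ — follows because $\tilde F$ is $C^k$ off the diagonal and the Legendre transform preserves regularity. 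This gives a $C^{k-1}$ (in fact the bookkeeping should yield $C^k$, matching the hypothesis $k\ge 3$ with the perturbation taken $C^3$ but one keeps track of derivatives) Finsler structure $\tilde\varphi$ on $D$.

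It then remains to check that $\tilde F$ is an enveloping function \emph{for this $\tilde\varphi$}. Condition (b) is immediate by construction: $p\mapsto d_x\tilde F_p$ parametrizes $U_x^*D$ for $\tilde\varphi$, and it is a diffeomorphism because $\Phi_x$ is a $C^2$-small perturbation of a diffeomorphism between compact manifolds of the same dimension, hence still injective with nonvanishing differential. Condition (a), that each $\tilde F_p$ is distance-like for $\tilde\varphi$, is now automatic: by definition of $\tilde\varphi^*_x$ the covector $d_x\tilde F_p$ lies on its unit sphere, so $\tilde\varphi^*(d_x\tilde F_p)=1$ for all $x$, which is exactly distance-likeness. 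Finally I would note that $\tilde F$ is $C^k$ outside the diagonal of $\partial D\times\partial D$ since the perturbation is required to respect that, and continuity on all of $\partial D\times D$ is preserved under $C^0$-small perturbations.

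The main obstacle I expect is the global/quantitative control in the second step: ensuring that for \emph{every} $x\in D$ simultaneously the perturbed map $\Phi_x$ remains an embedding onto a quadratically convex hypersurface, with uniform bounds, so that the resulting $\tilde\varphi$ is a bona fide Finsler metric (positive-definite fundamental tensor, no degeneration as $x$ varies over the compact $D$). This requires the smallness of the perturbation to be measured in a norm strong enough to dominate the second derivatives entering the curvature computation, uniformly in $x$ — i.e. it is genuinely a $C^3$ (or $C^2$ in the relevant variables plus one more for the parameter) condition, which is why the lemma is stated for $C^3$ perturbations; tracking exactly which derivatives are needed, and that $k\ge 3$ suffices, is the delicate bookkeeping. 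A secondary point to be careful about is behavior near the diagonal of $\partial D\times\partial D$, where $F$ itself is only continuous; but since $x$ ranges over the interior disc $D$ when we form $d_x\tilde F_p$ (or we restrict attention to $x$ bounded away from $\partial D$, then extend), this does not obstruct the argument.
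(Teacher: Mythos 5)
Your proposal is correct and takes essentially the same approach as the paper: the key observation in both is that a $C^2$-small perturbation of the quadratically convex hypersurface $\{d_xF_p : p\in\partial D\}\subset T_x^*D$ is still quadratically convex, hence is the unit cosphere of a Minkowski norm whose Legendre dual defines $\tilde\varphi$. The paper's proof is just a terse version of yours; the extra steps you include (explicitly noting that distance-likeness for $\tilde\varphi$ is automatic by construction, and checking condition (b)) are left implicit in the paper.
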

\begin{proof}
Since $\varphi$ is a Finsler metric,  the image of the map $\partial D\rightarrow T_x^*D, p\mapsto d_xF_p$ is quadratically convex. And $p\mapsto d_x\tilde{F}_p$ is a $C^2$ small perturbation of this map, hence also has quadratically convex image, therefore the image is the unit sphere of some Minkowski norm on $T_x^*D$. And the dual norm $\tilde{\varphi}$ is a Finsler norm at $x$.
\end{proof}





\begin{definition}
([2]) Let $f$ be a function on a Finsler manifold $(M, \varphi)$ that supports $M$, and $\gamma:[a,b]\rightarrow M$ be a geodesic. We say that $\gamma$ is \textbf{calibrated by $f$} if $f(\gamma(t_2))-f(\gamma(t_1))=t_2-t_1,$ for any $ a\leq t_1<t_2\leq b$.
\end{definition}

The \textbf{(Finslerian) gradient} of a distance like function $f:D\rightarrow \mathbb{R}$ at $x\in D$, denoted $\textup{grad}f(x)$, is defined to be the unit tangent vector $v\in U_x D$ such that $d_xf(v)=1$. If $\gamma$ is calibrated by $f$, then for all points on $\gamma$, the tangent vector of $\gamma$ coincide with the gradient of $f$.

\begin{lem}
If we have an enveloping function $F$ on a Finsler disc $(D, \varphi)$, then $D$ has no conjugate points.
\end{lem}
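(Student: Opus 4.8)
The plan is to show that the existence of an enveloping function $F$ gives, for every direction at every point, a globally defined distance-like function that calibrates the geodesic in that direction, and then to use a standard second-variation/index-form argument to rule out conjugate points. Conjugate points are obstructions to the minimality of geodesics detected by the Jacobi equation; the key mechanism is that a geodesic which is \emph{calibrated} by a $C^1$ distance-like function (equivalently, whose tangent field is the gradient of such a function) cannot have conjugate points, because the presence of a conjugate point would contradict the fact that the calibrating function is forward $1$-Lipschitz and hence that the geodesic is globally minimal on the relevant sub-interval.

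First I would recall from the definition that for each $p\in\partial D$ the function $F_p=F(p,\cdot)$ is distance-like, so $\varphi^*(d_xF_p)=1$ everywhere, and hence $F_p$ is forward $1$-Lipschitz (as shown in the excerpt). Next, fix any $x_0\in D$ and any unit vector $v_0\in U_{x_0}D$. By property (b) of the enveloping function, the map $p\mapsto d_{x_0}F_p$ is a diffeomorphism onto $U^*_{x_0}D$, so there is a unique $p_0\in\partial D$ with $\operatorname{grad}F_{p_0}(x_0)=v_0$. Then I would argue that the integral curve $\gamma$ of the gradient field $x\mapsto\operatorname{grad}F_{p_0}(x)$ through $x_0$ is a geodesic calibrated by $F_{p_0}$: along such a curve $\frac{d}{dt}F_{p_0}(\gamma(t))=d_{\gamma(t)}F_{p_0}(\operatorname{grad}F_{p_0}(\gamma(t)))=1$, so $F_{p_0}(\gamma(t_2))-F_{p_0}(\gamma(t_1))=t_2-t_1$, and combined with forward $1$-Lipschitz-ness this forces $d(\gamma(t_1),\gamma(t_2))=t_2-t_1$, i.e. $\gamma$ is minimal, hence a geodesic (one must check the gradient field is $C^1$ so that the integral curve is well-defined and unique, using $k\geq 2$ and quadratic convexity of $U^*_xD$).

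Having produced, through every point and in every direction, a globally minimal geodesic calibrated by a distance-like function, I would conclude as follows. Suppose for contradiction that some geodesic segment $\sigma:[0,\ell]\to D$ has a pair of conjugate points $\sigma(0)$ and $\sigma(\ell)$. Lift to the direction $v=\sigma'(0)$ and build, as above, the calibrated geodesic $\gamma$ with $\gamma(0)=\sigma(0)$, $\gamma'(0)=v$; by uniqueness of geodesics with given initial data, $\gamma=\sigma$ on $[0,\ell]$. But a geodesic admitting conjugate points $\gamma(0),\gamma(\ell)$ fails to be minimizing past $\gamma(\ell)$ — more precisely, for $\varepsilon>0$ small the segment $\gamma|_{[0,\ell+\varepsilon]}$ is not minimal, since a nontrivial Jacobi field vanishing at the endpoints produces, via the second variation formula, a variation strictly shortening $\gamma|_{[0,\ell+\varepsilon]}$. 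This contradicts $d(\gamma(0),\gamma(\ell+\varepsilon))=\ell+\varepsilon$, which we established from calibration. Hence no conjugate points exist.

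The main obstacle I anticipate is the regularity bookkeeping in the middle step: justifying that the Finslerian gradient field of a merely distance-like ($C^1$) function $F_p$ is regular enough for its integral curves to be unique and to be genuine geodesics, and that these integral curves are exactly the geodesics with the prescribed initial velocity. This uses that $U^*_xD$ is quadratically convex (so the Legendre-type map relating $d_xF_p$ to $\operatorname{grad}F_p(x)$ is a diffeomorphism) together with the smoothness of $F$ off the diagonal; one should also make sure the argument is local enough that it applies on a disc where geodesics need not be globally defined, which is handled by working on small sub-intervals and invoking the definition of geodesic as a local distance-realizer. The second-variation argument itself is the standard Finsler index-form computation and I would cite it rather than reproduce it.
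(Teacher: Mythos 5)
Your proposal is correct and follows essentially the same route as the paper: show that integral curves of $\operatorname{grad}F_p$ are minimal geodesics calibrated by the distance-like function $F_p$, then use property (b) of the enveloping function to match any geodesic with such an integral curve, concluding that every geodesic is minimal and hence that there are no conjugate points. The only difference is cosmetic: the paper simply asserts the final implication ``every geodesic minimal $\Rightarrow$ no conjugate points,'' whereas you explicitly invoke the second-variation/index-form argument to justify it, and you flag the regularity of the gradient field (which the paper leaves implicit); these are reasonable elaborations, not a different proof.
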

\begin{proof}

If $f$ is a distance-like function on $D$, then any integral curve of $\textup{grad}f$ is a minimal geodesic. In fact, let $\gamma: [a, b]\rightarrow D$ be such a unit-speed curve and $a\leq t_1<t_2\leq b$. Then for all $ s\in(a,b), df_{\gamma(s)}(\gamma'(s))=1$ since $\gamma'(s)$ is the gradient of $f$ at $\gamma(s)$. Thus
\begin{displaymath}
t_2-t_1\geq d(\gamma(t_1),\gamma(t_2))\geq f(\gamma(t_2))-f(\gamma(t_1))=\int_{t_1}^{t_2}df_{\gamma(s)}(\gamma'(s))ds= t_2-t_1. 
\end{displaymath}
This implies $d(\gamma(t_1),\gamma(t_2))=t_2-t_1=f(\gamma(t_2))-f(\gamma(t_1))$. Hence $\gamma$ is a minimal geodesic and it is calibrated by $f$.

Now let $\sigma: [a,b]\rightarrow D$ be a geodesic. Since a geodesic is a local minimizer, we can find $\delta>0$ such that $d(\sigma(a), \sigma(a+\delta))=\delta$. Let $p\in \partial D$ be a point such that $d_{\sigma(a)}F_p$ is the dual to $\sigma'(a)$, then the integral curve $\gamma$ of $\textup{grad}F_p$ with $\gamma(a)=\sigma(a)$ is a minimal geodesic calibrated by $F_p$. Since $\gamma$ and $\sigma$ are geodesics with the same starting point and initial direction, $L(\gamma)=b-a=L(\sigma)$, therefore $\gamma=\sigma$. Therefore $\sigma$ is a minimal geodesic. This implies any geodesic is a minimal one, so $M$ has no conjugate points.


\end{proof}

\begin{rem}
By the proof of Lemma 2, if a geodesic $\gamma$ is calibrated by a distance-like function, then $\gamma$ is minimal.
\end{rem}

\section{Proof of the main theorem}

Lei $\psi: U_0\rightarrow \mathbb{R}^2$ be a local chart around $p_0$ mapping $p_0$ to origin. Since $\psi$ is a diffeomorphism we can define the metric on $\psi(U_0)$ simply by pushing forward that on $U_0$ through $\psi$. Once we get this isometric embedding, we can assume the image of $\psi$ is $U_{\epsilon}:=\{(x, y)\in\mathbb{R}^2|x^2+y^2<\epsilon^2\}$, and we identify $U_0$ with $U_{\epsilon}$. By choosing small $\epsilon$  and let $D_{\epsilon}$ be the closure of $U_{\epsilon}$, we get a simple Finsler metric on $D_{\epsilon}$. Let $\varphi_0$ be a constant Finsler metric on $\mathbb{R}^2$ which is identical to $\varphi|_{T_{p_0} D}$. For $p\in\partial D_{\epsilon}$, let $\gamma^0_p:[-a_0, b_0]\rightarrow D_{\epsilon}$ be the geodesic in the Finsler disk $(D_{\epsilon}, \varphi_0)$ with $\gamma_p(0)=p_0$ and $\gamma_p(b_0)=p, \gamma_p(a_0)=-p$. Let $\gamma_p: [-a, b]\rightarrow D_{\epsilon}$ be the geodesic in $(D_{\epsilon}, \varphi)$ with endpoints on $\partial D_{\epsilon}$ and $\gamma_p(0)=p_0, \gamma'_p(0)=(\gamma^{0}_p)'(0)$. Then we can define a function $F$ on $\partial D_{\epsilon}\times D_{\epsilon} $ by the following: if $x$ lies on the left hand side of the direction of $\gamma_p$, then $F(p,x):=d(x, \gamma_p)$, otherwise define $F(p, x):=-d(\gamma_p, x)$. Then $F$ is a $C^k$ enveloping function for $\varphi$.

 Similar as above we get a $C^{\infty}$ enveloping function $F^0$ on $\partial D_{\epsilon}\times \mathbb{R}^2$ for the constant metric $\varphi_0$. By choosing $\epsilon$ small we may assume $F$ is a $C^k$ small perturbation of $F^0|_{\partial D_{\epsilon}\times D_{\epsilon}}$. Extend $F$ to $\partial D_{\epsilon}\times \mathbb{R}^2$  so that $F$ is a $C^k$ small perturbation of $F^0$. 

Take a large $r$ and let $g$ be a function on $\mathbb{R}^2$ with value 1 on $D_{\epsilon}$ and value 0  outside $D_{r}$. By choosing $r$ large enough we may assume $g$ has very small $i$th($1\leq i\leq k$) derivatives, and define a function $\tilde{F}$ on $\partial D_{\epsilon}\times \mathbb{R}^2$ by 
$$\tilde{F}(p,x)=F(p,x)g(x)+F_0(p,x)(1-g(x)).$$
Then $\tilde{F}$ is a $C^k$ small perturbation of $F_0$. By Lemma 1 it is an enveloping function for some Finsler metric $\tilde{\varphi}$ on $TD_r$. $\tilde{\varphi}$ agrees with $\varphi$ on $TD_{\epsilon}$, and it agrees with $\varphi_0$ on $T(\mathbb{R}^2\backslash D_{r})$. Therefore we can extend $\tilde{\varphi}$ outside $TD_r$ by $\varphi_0$ and get a $C^k$ smooth Finsler metric on $\mathbb{R}^2$. Take an integer $l>r$, then $\tilde{\varphi}$ is also a metric on $\mathbb{T}^2:=\mathbb{R}^2/(2l\mathbb{Z})^2$. By Lemma 2 we know that $\tilde{\varphi}$ has no conjugate points.\\

Suppose $\varphi$ is symmetric, use the same notations as above, then $\gamma_p$ and $\gamma_{-p}$ are the same curve with different directions. Therefore we have 
\begin{equation}
F(p,x)=-F(-p, x), \tag{*}
\end{equation}
 for all $x\in D_{\epsilon}$. Extend $F$ to $\partial D_{\epsilon}\times \mathbb{R}^2$ so that (*) holds. Repeating the procedures as above we get a function $\tilde{F}$ on $\partial D_{\epsilon}\times \mathbb{R}^2$. Now define 

$$\tilde{d}(x,y)=\max_{p\in\partial D_{\epsilon}}\tilde{F}(p,x)-\tilde{F}(p,y),$$
then $\tilde{d}$ is symmetric and it is $C^k$ close to $d_0$, which is the metric on $\mathbb{R}^2$ generated by $\varphi_0$. As we get such metric $\tilde{d}$, we can define a Finsler metric on the tangent bundle in the following way: for $x\in \mathbb{R}^2, v\in T_x \mathbb{R}^2$, let $c:(-\epsilon_1, \epsilon_1)\rightarrow D_r$ be a curve with $c(0)=x, c'(0)=v$. Define $$\tilde{\varphi}(x,v):=\lim_{t\rightarrow 0}\frac{\tilde{d}(x,c(t))}{t}.$$
Then $\tilde{F}$ is an enveloping function for $\tilde{\varphi}$. By symmetry of $\tilde{d}$ we get symmetry of $\tilde{\varphi}$.\\

\section{Acknowledgement}
The author would like to thank Dimitri Burago for suggesting this problem, improvement of content and numerous discussion with him.


\begin{thebibliography}{99}

\bibitem{book1}D. Burago, S. Ivanov, Riemannian tori without conjugate points are flat, Geom. Funct. Anal. 4 (1994), no. 3, 259-269.
\bibitem{book2}D. Burago, S. Ivanov, and B. Kleiner. On the structure of the stable norm of periodic metrics. Mathematical Research Letters 4 (1997), 791-808.
\bibitem{book3}D. Burago, S. Ivanov, Boundary rigidity and filling volume minimality of metrics close to a flat one, Ann. of Math. 171 (2010), no. 2, 1183-1211.
\bibitem{book4}C. Croke, B. Kleiner, On tori without conjugate points, Invent. Math. 120 (1995), 241--257.
\bibitem{book5}Jose B. Gomes, Rafael O. Ruggiero . On Finsler surfaces without conjugate points. Ergodic Theory and Dynamical Systems(2013), 33, 455-474.
\bibitem{book6}J. Heber, On the geodesic flow of tori without conjugate points, Math. Z. 216 (1994), 209-216.
\bibitem{book7}G. A. Hedlund, M. Morse, Manifolds Without Conjugate Points, Trans. Am. Math. Soc., 51,  (1942), 362-386.
\bibitem{book8}E. Hopf. Closed surfaces without conjugate points. Proceedings of the National Academy of Sciences 34 (1948), 47-51.













\end{thebibliography}
\end{document}